\newtheorem{theorem}{Theorem}
\newtheorem{acknowledgement}[theorem]{Acknowledgement}
\newtheorem{definition}[theorem]{Definition}
\newtheorem{lemma}[theorem]{Lemma}
\newtheorem{notation}[theorem]{Notation}
\newtheorem{proposition}[theorem]{Proposition}
\newtheorem{remark}[theorem]{Remark}
\newenvironment{proof}[1][Proof]{\noindent\textbf{#1.} }{\ \rule{0.5em}{0.5em}}
\begin{document}

\title{Quantitative statistical stability and convergence to equilibrium.\\
An application to maps with indifferent fixed points.}
\author{Stefano Galatolo \thanks{
Dipartimento di Matematica, Universita di Pisa, Via \ Buonarroti 1,Pisa -
Italy. Email: galatolo@dm.unipi.it}}
\maketitle

\begin{abstract}
We show a general relation between fixed point stability of suitably
perturbed transfer operators and convergence to equilibrium (a notion which
is strictly related to decay of correlations). \newline
We apply this relation to deterministic perturbations of a large class of
maps with indifferent fixed points. It turns out that the $L^{1}$ dependence
of the a.c.i.m. on small suitable deterministic changes for these kind of
maps is H\"{o}lder, with an exponent which is explicitly estimated.
\end{abstract}

\section{Introduction}

The statistical stability of dynamical systems is well understood in the
uniformly hyperbolic case (see e.g. \cite{L2} and related references). In
this case, quantitative estimations are available, proving the Lipschitz or
even differentiable dependence of the physical measure under perturbations.
For systems having a non uniformly expanding/hyperbolic behavior, much less
is known. Qualitative results and some quantitative (providing precise
information on the modulus of continuity) ones are known under different
assumptions (see. e.g.\ the survey \cite{ASsu}, the papers \cite{A},\cite{AV}%
,\cite{BBS},\cite{BV},\cite{AT},\cite{SV},\cite{D} and \cite{BB}, for a
recent survey, on differentiable dependence, where also some result beyond
the uniformly expanding case are discussed). While these results do not give
a quantitative information for the general case of deterministic
perturbations of maps with indifferent fixed points, some very recent result
give quantitative information (and even differentiability) for some
deterministic perturbation of certain maps, leading to families of
intermittent maps (\cite{BT},\cite{BS},\cite{Ko}).

In this note we show a general quantitative relation between speed of
convergence to equilibrium and statistical stability. This relation gives
nontrivial information for systems having different speed of convergence to
equilibrium. We show the flexibility of this approach applying it to
perturbations of a class of maps with indifferent fixed points having an
absolutely continuous invariant probability measure. These are systems whose
speed of convergence to equilibrium is bounded by a power law. We show that
this implies the H\"{o}lder continuity of the absolutely continuous
invariant measure under (suitable) deterministic perturbations of the system.

The paper is structured as follows: in Section \ref{1} we show a general
result estimating quantitatively the stability of operator's fixed points
under certain perturbations which is suitable to be applied to estimate the
stability of physical measures. In Section \ref{mann} we apply this result
to a class of maps with \ indifferent fixed points under a large set
deterministic\ perturbations. We remark that our results applies to
perturbations of actual intermittent maps and not only to perturbations of
expanding maps leading to intermittent behavior after perturbation.

\begin{acknowledgement}
{The work was partially supported by EU Marie-Curie IRSES Brazilian-European
partnership in Dynamical Systems (FP7-PEOPLE-2012-IRSES 318999 BREUDS). The
author thanks The Leverhulme Trust for support through Network Grant
IN-2014-021.}
\end{acknowledgement}

\section{Quantitative fixed point stability and convergence to equilibrium. 
\label{1}}

Let us consider a dynamical system $(X,T)$ where $X$ is a metric space and
the space $SM(X)$ of Borel measures with sign on $X$. The dynamics $T$
naturally induces a function $L:SM(X)\rightarrow SM(X)$ which is linear and
is called transfer operator. If $\nu \in SM(X)$ then $L[\nu ]\in SM(X)$ is
defined by%
\begin{equation*}
L[\nu ](B)=\nu (T^{-1}(B))
\end{equation*}%
for every measurable set $B$. If $X$ is a manifold, the measure is
absolutely continuous: $d\nu =f~dm$ (here $m$ represents the Lebesgue
measure) and \thinspace $T$ is nonsingular, the operator induces another
operator $\tilde{L}:L^{1}(m)\rightarrow L^{1}(m)$ acting on the measure
densities ($\tilde{L}f=\frac{d(L(f~m))}{dm}$). By a small abuse of notation
we will still indicate by $L$ this operator.

An invariant measure is a fixed point for the transfer operator. Let us now
see a quantitative stability statement for these fixed points under suitable
perturbations of the operator.

Let us consider a certain system having a transfer operator $L_{0}$ for
which we know the speed of convergence to equilibrium (see (\ref{equil})
below). Consider a "nearby" system $L_{1}$ having suitable properties which
will be specified below: suppose there are two normed vector spaces of
measures with sign $B_{s}\subseteq B_{w}\subseteq SM(X)$ (the strong and
weak space) with norms $||~||_{w}\leq ||~||_{s}$ and suppose the operators $%
L_{0}$ and $L_{1}$ preserve the spaces: $L_{i}(B_{s})\subset B_{s}$ and $%
L_{i}(B_{w})\subset B_{w}$ with $i\in \{0,1\}$.

Let us consider%
\begin{equation*}
V_{s}:=\{f\in B_{s},f(X)=0\}
\end{equation*}%
the space of zero average measures in $B_{s}$. The speed of convergence to
equilibrium of a system will be measured by the speed of contraction to $0$
of this space by the iterations of the transfer operator.

\begin{definition}
\label{d1}Let $\phi (n)$ be a real sequence converging to zero. We say that
the system has \emph{convergence to equilibrium }with respect to norms $%
||~||_{w}$, $||~||_{s}$and speed $\phi $ if $\forall g\in V_{s}$%
\begin{equation}
||L_{0}^{n}(g)||_{w}\leq \phi (n)||g||_{s}.  \label{equil}
\end{equation}
\end{definition}

Suppose $f_{0}$ ,$f_{1}\in B_{s}$ are fixed probability measures of $L_{0}$
and $L_{1}$. The following statement\footnote{%
Similar quantitative stability statements are used in \cite{GN} and \cite{GL}
to support rigorous computation of invariant measures and get quantitative
estimations for the statistical stability of Lorenz like maps.} gives a
quantitative way to estimate the statistical stability of $L_{0}$ under
perturbation if the speed of convergence to equilibrium is known.

\begin{theorem}
\label{gen}Suppose we have estimations on the following aspects of the
system:

\begin{enumerate}
\item (speed of convergence to equilibrium) there is $\phi \in C^{0}(\mathbb{%
R)},~\phi (t)$ decreasing to $0$ as $t\rightarrow \infty $ such that $L_{0}$
has convergence to equilibrium with respect to norms $||~||_{w}$, $||~||_{s}$%
and speed $\phi $.

\item (control on \ the norms of the invariant measures) There is $M\geq 0$
such that%
\begin{equation*}
\max (||f_{1}||_{s},||f_{0}||_{s})\leq M;
\end{equation*}

\item (iterates of the transfer operator are bounded for the weak norm)
there is $\tilde{C}\geq 0$ such that for each $n$, 
\begin{equation*}
||L_{0}^{n}||_{B_{w}\rightarrow B_{w}}\leq \tilde{C}.
\end{equation*}

\item (control on the size of perturbation in the strong-weak norm) Denote 
\begin{equation*}
\underset{||f||_{s}\leq 1}{\sup }||(L_{1}-L_{0})f||_{w}:=\epsilon
\end{equation*}%
consider the decreasing function $\psi $ defined as $\psi (x)=\frac{\phi (x)%
}{x}$, then%
\begin{equation*}
||f_{1}-f_{0}||_{w}\leq (2M+M\tilde{C})\epsilon (\psi ^{-1}(\epsilon )+1).
\end{equation*}
\end{enumerate}
\end{theorem}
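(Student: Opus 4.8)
The plan is to decompose the difference $f_1-f_0$ through a telescoping-type identity that separates the effect of the perturbation at a single step from the cumulative contraction coming from convergence to equilibrium. First I would write, for any $n$,
\[
f_1-f_0 = (L_1^n f_1 - L_0^n f_1) + (L_0^n f_1 - L_0^n f_0),
\]
using that $f_0,f_1$ are fixed points of $L_0,L_1$ respectively. The second term is controlled by convergence to equilibrium: since $f_1-f_0\in V_s$ has zero average and $\|f_1-f_0\|_s\le 2M$, we get $\|L_0^n(f_1-f_0)\|_w\le \phi(n)\cdot 2M$. For the first term I would expand $L_1^n-L_0^n = \sum_{k=0}^{n-1} L_1^{k}(L_1-L_0)L_0^{n-1-k}$, apply each factor in its appropriate space: $\|L_0^{n-1-k}f_1\|_s$ must be bounded (here I expect to need that $L_0$ preserves the strong ball up to the bound $M$, or at least that the iterates of $f_1$ stay bounded in $B_s$ — see the obstacle below), then $(L_1-L_0)$ costs $\epsilon$ passing from $B_s$ to $B_w$, and finally $\|L_1^k\|_{B_w\to B_w}$ must be bounded; combined with hypothesis 3 this yields a bound of the form $\|L_1^n f_1 - L_0^n f_1\|_w \le n\,\tilde C\, M\,\epsilon$ (possibly with a constant adjustment).

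Putting the two estimates together gives, for every $n$,
\[
\|f_1-f_0\|_w \le 2M\,\phi(n) + n\,\tilde C\, M\,\epsilon.
\]
The final step is to optimize over $n$. Since $\phi(n)\approx n\,\psi(n)$ by definition of $\psi(x)=\phi(x)/x$, choosing $n\approx \psi^{-1}(\epsilon)$ balances the two terms: the first becomes $\approx 2M\,n\,\epsilon$ and the second $\approx \tilde C M n \epsilon$, and with $n=\psi^{-1}(\epsilon)+1$ (the $+1$ to handle the rounding to an integer and to keep $\phi$ in its decreasing regime) one recovers exactly $\|f_1-f_0\|_w\le (2M+M\tilde C)\,\epsilon(\psi^{-1}(\epsilon)+1)$. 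Monotonicity of $\phi$ and $\psi$ makes this substitution legitimate and lets one pass from the integer $n$ to the real argument $\psi^{-1}(\epsilon)$.

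The main obstacle I anticipate is the control of $\|L_0^{j}f_1\|_s$ (and of $\|L_1^k\|_{B_w\to B_w}$) uniformly in $j$: hypothesis 3 only gives a weak-norm bound on powers of $L_0$, not of $L_1$, and only hypothesis 2 gives a strong-norm bound on $f_1$ itself, not on its $L_0$-iterates. The intended reading is presumably that $L_0$ restricted to $B_s$ is a weak contraction on probability densities so that $\|L_0^j f_1\|_s\le M$ for all $j$ (true in the typical Lasota–Yorke setting once $f_1$ is in the relevant invariant cone), and that the weak-norm bound on iterates transfers appropriately; making this rigorous — i.e. pinning down exactly which of these uniform bounds is being silently used, and checking the constant $(2M+M\tilde C)$ comes out as stated rather than with extra factors — is where the care is needed. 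The arithmetic of the final optimization and the $+1$ bookkeeping is routine by comparison.
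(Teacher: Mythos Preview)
Your decomposition and optimization steps are correct, but the obstacle you identify is genuine and your proposal does not actually resolve it: with the telescoping order you wrote,
\[
L_1^n - L_0^n = \sum_{k=0}^{n-1} L_1^{k}(L_1-L_0)L_0^{n-1-k},
\]
you are forced to control $\|L_0^{j}f_1\|_s$ uniformly in $j$ and $\|L_1^k\|_{B_w\to B_w}$ uniformly in $k$, neither of which follows from the stated hypotheses. Appealing to a Lasota--Yorke cone argument here would be adding an assumption the theorem does not make.

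The fix, and the point of the paper's proof, is to reverse the order of the telescoping:
\[
L_0^N - L_1^N = \sum_{k=1}^{N} L_0^{N-k}(L_0-L_1)L_1^{k-1}.
\]
Now apply this to $f_1$. Since $f_1$ is the fixed point of $L_1$, one has $L_1^{k-1}f_1 = f_1$ for every $k$, so the sum collapses to $\sum_{k=1}^{N} L_0^{N-k}(L_0-L_1)f_1$. At this point only hypothesis~2 is needed for the strong norm (just $\|f_1\|_s\le M$, no iterates), the mixed norm of $L_0-L_1$ gives the factor $\epsilon$, and hypothesis~3 bounds $\|L_0^{N-k}\|_{B_w\to B_w}\le \tilde C$. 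This yields $\|L_1^N f_1 - L_0^N f_1\|_w \le N\tilde C M\epsilon$ with exactly the stated constants, and the rest of your argument goes through unchanged.
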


\begin{proof}
The proof is a direct computation from the assumptions%
\begin{eqnarray*}
||f_{1}-f_{0}||_{w} &\leq &||L_{1}^{N}f_{1}-L_{0}^{N}f_{0}||_{w} \\
&\leq
&||L_{1}^{N}f_{1}-L_{0}^{N}f_{1}||_{w}+||L_{0}^{N}f_{1}-L_{0}^{N}f_{0}||_{w}
\\
&\leq &||L_{0}^{N}(f_{1}-f_{0})||_{w}+||L_{1}^{N}f_{1}-L_{0}^{N}f_{1}||_{w}.
\end{eqnarray*}%
Since $f_{1}-f_{0}\in V_{s}$ , $||f_{1}-f_{0}||_{s}\leq 2M$,%
\begin{equation*}
||f_{1}-f_{0}||_{w}\leq 2M\phi (n)+||L_{1}^{N}f_{1}-L_{0}^{N}f_{1}||_{w}
\end{equation*}%
but%
\begin{equation*}
(L_{0}^{N}-L_{1}^{N})=\sum_{k=1}^{N}L_{0}^{N-k}(L_{0}-L_{1})L_{1}^{k-1}
\end{equation*}%
hence%
\begin{eqnarray*}
-(L_{1}^{N}-L_{0}^{N})f_{1}
&=&\sum_{k=1}^{N}L_{0}^{N-k}(L_{0}-L_{1})L_{1}^{k-1}f_{1} \\
&=&\sum_{k=1}^{N}L_{0}^{N-k}(L_{0}-L_{1})f_{1}
\end{eqnarray*}%
then%
\begin{equation*}
||f_{1}-f_{0}||_{w}\leq 2M\phi (N)+\epsilon MN\tilde{C}.
\end{equation*}

Now consider the function $\psi $ defined as $\psi (x)=\frac{\phi (x)}{x},$%
chose $N$ such that $\psi ^{-1}(\epsilon )\leq N\leq \psi ^{-1}(\epsilon )+1$%
, in this way $\frac{\phi (N)}{N}\leq \epsilon \leq \frac{\phi (N-1)}{N-1}$,%
\begin{equation*}
||f_{1}-f_{0}||_{w}\leq (2M+M\tilde{C})\epsilon (\psi ^{-1}(\epsilon )+1).
\end{equation*}

\begin{remark}
\label{k1}If $\phi (x)=Cx^{-\alpha }$ then $\psi (x)=Cx^{-\alpha -1}$ and $%
\epsilon (\psi ^{-1}(\epsilon )+1)=C\epsilon (\frac{1}{\sqrt[\alpha +1]{%
\epsilon }}+1)\sim \epsilon ^{1-\frac{1}{\alpha +1}}$ and we have the
estimation for the modulus of continuity%
\begin{equation*}
||f_{1}-f_{0}||_{w}\leq K_{1}\epsilon ^{1-\frac{1}{\alpha +1}}
\end{equation*}%
where the constant $K_{1}$ depends on $M,\tilde{C},C$ and not on the
distance of the operators measured by $\epsilon $.
\end{remark}
\end{proof}

\section{Maps with indifferent fixed points\label{mann}}

In this section we introduce a family of maps with indifferent fixed point;
we then adapt some known results for these kind of maps to the form required
to apply Theorem \ref{gen}.

\begin{definition}
Let $0<\alpha <1$ and let us consider a map $T:[0,1]\rightarrow \lbrack 0,1]$
we say that $T$ is in the set $M(\alpha ,c,C,d)$ if:

\begin{enumerate}
\item $T(0)=0$ and there is a point $\overline{d}\in (d,1)$ such that the
two branches $T|[0,\overline{d}):=T_{1}$ and $T|[\overline{d},1):=T_{2}$ are
continuous and onto.

\item Each branch $T_{i}$ of $T$ is increasing, weakly convex and can be
extended to a $C^{1}$ function;

\item $T^{\prime \prime }$ is continuous everywhere but in the points $0$
and $\overline{d}$; $T^{\prime }>1$ for all $x\in (0,\overline{d})\cup (%
\overline{d},1)$ and $T^{\prime }(0)=1$.

\item There are $\overline{\alpha },\overline{C},\overline{c}\in (0,\infty
), $ $\overline{\alpha }\leq \alpha ,\overline{C}\leq C,\overline{c}\leq c$
such that%
\begin{equation}
|T^{\prime }(x)|=1+\overline{c}x^{\overline{\alpha }}+o(x^{\overline{\alpha }%
})  \label{3.1}
\end{equation}%
\begin{eqnarray*}
|T^{\prime \prime }(x)| &\leq &\overline{C}x^{\overline{\alpha }-1}, \\
|T^{\prime }(x)| &\leq &\overline{C}.
\end{eqnarray*}
\end{enumerate}

We say that $T$ is in the set $M_{0}(\alpha ,c,C,d)$ if moreover $T$ has an
absolutely continuous invariant probability measure with density $h$ such
that.%
\begin{equation*}
\gamma _{0}\leq h(x)
\end{equation*}%
for some $0<\gamma _{0}<1.$ We indicate with $M_{0}$ the set of maps that
are in $M_{0}(\alpha ,c,C,d)$ for some value of the parameters.
\end{definition}

\begin{remark}
\label{stv}We remark that the above conditions are verified for maps of the
following kind: $T(x)=\left\{ 
\begin{array}{c}
x(1+2^{\alpha }x^{\alpha })~\forall x\in \lbrack 0,1/2) \\ 
2x-1~\forall x\in \lbrack 1/2,1]%
\end{array}%
\right. ,$ in particular this map is in $M_{0}$ (see \cite{LSV}).
\end{remark}

\begin{remark}
If a map $T$ is in $M(\alpha ,c,C,d)$, the following condition is then
satisfied: there is a constant $C_{3}\in (0,\infty )$ such that%
\begin{equation}
T(x)\geq x+C_{3}x^{1+\alpha }.  \label{MP}
\end{equation}
\end{remark}

\begin{notation}
Let us denote by $M(\alpha ,c,C,C_{3},d)$ the set of maps in $M(\alpha
,c,C,d)$ satisfying \ref{MP} for a certain $C_{3}$.
\end{notation}

For such a map, we can apply a useful estimation on the asymptotic behavior
of $f$ which can be found in \cite{Mr} (Proposition 1.1, Theorem 1, Equation
3).

\begin{proposition}
Let us consider the transfer operator $L_{T}$ associated to $T\in $ $%
M(\alpha ,c,C,C_{3},d)$ and the following cone of decreasing functions%
\begin{equation*}
C_{A}=\{g\in L^{1}|g\geq
0,g~decreasing,~\int_{0}^{1}g~dm=1,\int_{0}^{x}g~dm\leq Ax^{1-\alpha }\}.
\end{equation*}

Let $A_{\ast }=((1-\alpha )C_{3}d^{2+\alpha })^{-1}$, if $A\geq A_{\ast },$%
then $L(C_{A})\subseteq C_{A}$. Moreover the unique invariant density $f$ of 
$T$ is in $C_{A_{\ast }}$.
\end{proposition}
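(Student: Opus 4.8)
The plan is to show that the cone $C_A$ is invariant under the transfer operator $L = L_T$ when $A \geq A_\ast$, and then to deduce that the invariant density $f$ lies in $C_{A_\ast}$. The cone $C_A$ imposes three conditions on a density $g$: positivity, monotonicity (decreasing), and the quantitative bound $\int_0^x g\,dm \leq A x^{1-\alpha}$ on the distribution function near the indifferent fixed point. Since $L$ preserves positivity and the integral $\int_0^1 g\,dm = 1$ automatically, the work is in (a) showing $Lg$ is decreasing whenever $g$ is, and (b) propagating the tail bound.

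For monotonicity, I would use the explicit form of the transfer operator as a sum over inverse branches: $(Lg)(x) = \sum_i g(T_i^{-1}(x))\,(T_i^{-1})'(x)$. Because each branch $T_i$ is increasing and weakly convex with $T_i' > 1$ (except the tangency at $0$), each inverse branch $T_i^{-1}$ is increasing and weakly concave, so $(T_i^{-1})'$ is decreasing; composing the decreasing $g$ with the increasing $T_i^{-1}$ keeps it decreasing, and a product of nonnegative decreasing functions is decreasing. Summing over the two branches preserves this. (One must be slightly careful at the common endpoint where the two branches are joined, but the sum of the one-sided limits works out because both branches are onto $[0,1]$.) This is essentially the standard argument and I expect it to be routine given the hypotheses in the definition of $M(\alpha,c,C,d)$.

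The substantive step is the tail estimate. Writing $G(x) = \int_0^x g\,dm$, I want to control $\int_0^x (Lg)\,dm$. The key identity is that $\int_0^x (Lg)\,dm = \int_{T^{-1}([0,x])} g\,dm = G(T_1^{-1}(x)) + (\,G(1) - G(T_2^{-1}(x))\,)$, using that the first branch maps $[0, T_1^{-1}(x)]$ onto $[0,x]$ and the second branch maps $[T_2^{-1}(x), 1]$ onto $[0,x]$. The second branch is uniformly expanding, so $T_2^{-1}(x)$ is bounded away from where trouble occurs and that term is easily controlled (contributing something like $x$ times a constant, hence absorbed). The first term is the dangerous one: near $0$, using $T(y) \geq y + C_3 y^{1+\alpha}$, i.e. $T_1^{-1}(x) \leq x - C_3' x^{1+\alpha}$ for small $x$ (a consequence of (\ref{MP})), and the inductive hypothesis $G(y) \leq A y^{1-\alpha}$, one gets $G(T_1^{-1}(x)) \leq A (T_1^{-1}(x))^{1-\alpha} \leq A x^{1-\alpha}(1 - C_3 x^\alpha/x \cdots)^{1-\alpha}$, and expanding shows the gain from the contraction $T_1^{-1}(x) < x$ exactly compensates the extra mass coming from the second branch, provided $A \geq A_\ast = ((1-\alpha)C_3 d^{2+\alpha})^{-1}$. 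The constant $A_\ast$ is precisely what makes the inequality $G(T_1^{-1}(x)) + \text{(second branch term)} \leq A x^{1-\alpha}$ close up. The main obstacle is carrying out this estimate uniformly in $x \in (0,1]$ — not just asymptotically as $x \to 0$ — which is why the global bounds $d$, $C_3$ (rather than just $\alpha$, $\overline{\alpha}$) enter the formula for $A_\ast$; this is the delicate bookkeeping that the cited reference \cite{Mr} handles.

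Finally, to conclude that the invariant density $f$ lies in $C_{A_\ast}$: since $L(C_{A_\ast}) \subseteq C_{A_\ast}$ and $C_{A_\ast}$ is convex and closed (in $L^1$, say, using that monotone $L^1$-bounded densities form a compact set by Helly-type arguments), one can either invoke a fixed-point argument inside the cone, or observe that starting from any $g \in C_{A_\ast}$ the Cesàro averages $\frac1n\sum_{k=0}^{n-1} L^k g$ stay in $C_{A_\ast}$ and converge (along a subsequence) in $L^1$ to an invariant density, which must then be the unique a.c.i.m. $f$ by ergodicity; hence $f \in C_{A_\ast}$. I would simply cite \cite{Mr} for the precise statement rather than reprove the closure/compactness details.
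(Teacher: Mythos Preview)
The paper does not actually prove this proposition: it is stated immediately after the sentence ``we can apply a useful estimation \ldots\ which can be found in \cite{Mr} (Proposition 1.1, Theorem 1, Equation 3)'' and is simply quoted from Murray's paper without proof. So there is no ``paper's own proof'' to compare your attempt against; the author defers entirely to \cite{Mr}.

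That said, your sketch is essentially the standard argument (and is, as far as I can tell, the one in \cite{Mr}): preservation of monotonicity via convexity of the branches, the identity $\int_0^x Lg\,dm = G(T_1^{-1}(x)) + (G(1)-G(T_2^{-1}(x)))$, and then balancing the gain from $T_1^{-1}(x) \leq x - C_3' x^{1+\alpha}$ against the contribution of the expanding branch to obtain the threshold $A_\ast$. The Ces\`aro/compactness argument for locating the invariant density in the cone is also the expected one. Your outline is correct in spirit; the only place where you are a bit loose is the ``second branch term'' $G(1)-G(T_2^{-1}(x))$, which you dismiss as $O(x)$---to get the explicit constant $A_\ast=((1-\alpha)C_3 d^{2+\alpha})^{-1}$ one needs to bound this term more carefully (using that $g$ is decreasing and that $1-T_2^{-1}(x)$ is controlled by the expansion on the second branch, together with the lower bound $T_2^{-1}(x)\geq d$), and it is this bookkeeping that produces the factor $d^{2+\alpha}$. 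Since you already flag this as ``the delicate bookkeeping that the cited reference \cite{Mr} handles,'' your proposal is in line with how the paper itself treats the matter.
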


\begin{remark}
\label{cone}We remark (\cite{Mr}, lemma 2.1) that if $f\in $ $C_{A}$ then $%
f(x)\leq Ax^{-\alpha }$.
\end{remark}

Now we estimate the speed of convergence to equilibrium. The following is
proved in \cite{LSV}

\begin{theorem}
Let us consider $T\in M_{0}(\alpha ,c,C,d),$ if $g\in L^{\infty },f\in
C^{1}, $ $\int f~dx=0$, $0<\gamma <\frac{1}{\alpha }-1$ there is $C_{f}$
depending on $f$ such that%
\begin{equation}
|\int fg\circ T^{n}~dm|\leq C_{f}||g||_{\infty }n^{-\gamma }.  \label{qwq}
\end{equation}
\end{theorem}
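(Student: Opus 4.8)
The plan is to derive the decay estimate (\ref{qwq}) for the system $T \in M_0(\alpha,c,C,d)$ from the known polynomial decay of correlations for the Liverani--Saussol--Vaiano (LSV) model, by a comparison/transfer argument rather than by reproving the tower estimates from scratch. The key structural facts available to us are: the invariant density $h$ of $T$ is bounded below by $\gamma_0 > 0$ (definition of $M_0$) and, by the Proposition together with Remark \ref{cone}, $h \in C_{A_*}$ so that $h(x) \le A_* x^{-\alpha}$; moreover $L_T$ preserves the cone $C_A$ for $A \ge A_*$. The strategy is to write the correlation integral $\int f\, (g\circ T^n)\, dm$ as a correlation with respect to the invariant measure $\mu = h\,dm$, namely $\int f\, (g\circ T^n)\, dm = \int \frac{f}{h}\,(g\circ T^n)\, h\, dm = \int_\mu \varphi\,(g\circ T^n)$ with $\varphi = f/h$. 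Since $h \ge \gamma_0$ and $f \in C^1$ on $[0,1]$, the observable $\varphi$ is bounded (and in fact Lipschitz on any region where $h$ is), so one reduces to the statement that $T$, with respect to its a.c.i.p.\ $\mu$, has decay of correlations $|\int_\mu \varphi\,(g\circ T^n) - \int_\mu \varphi \int_\mu g| \le C_\varphi \|g\|_\infty n^{-\gamma}$ for Lipschitz (or Hölder, or $C^1$) $\varphi$ against $L^\infty$ observables $g$, for every $0 < \gamma < \frac1\alpha - 1$. Note the centering term $\int_\mu \varphi = \int f\,dm = 0$ by hypothesis, which is exactly why the bare integral decays.

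First I would recall the general mechanism behind the LSV-type bound: the class $M(\alpha,c,C,C_3,d)$ has the same first-return-time tail behaviour to a ``good'' set (the complement of a neighbourhood of the indifferent fixed point) as the LSV map with parameter $\alpha$, because the renewal behaviour near $0$ is governed precisely by the asymptotics $T'(x) = 1 + \bar c\, x^{\bar\alpha} + o(x^{\bar\alpha})$ with $\bar\alpha \le \alpha$ and by the lower bound (\ref{MP}), $T(x) \ge x + C_3 x^{1+\alpha}$, which controls the escape time from the parabolic region: iterating $x_{k+1} = T(x_k)$ from $x_0 = x$ gives $x_k \sim (C_3 \alpha k)^{-1/\alpha}$, so the time to leave a fixed neighbourhood of $0$ starting from Lebesgue-typical points has a tail $\sim k^{-1/\alpha}$. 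This puts $T$ in the framework of Young towers with polynomial tails of order $1/\alpha$, for which Young's theorem gives decay of correlations $O(n^{-(1/\alpha - 1)})$ for Hölder observables; the open exponent $\gamma < \frac1\alpha - 1$ is the standard loss. I would therefore either (i) cite the tower construction for this class (the reference \cite{LSV} states the result precisely for $M_0$), or (ii) reduce directly: inclusion $M_0(\alpha,\ldots) $ into the LSV family up to the distortion and convexity controls in the definition, and invoke the LSV correlation bound verbatim.

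The remaining work is then to convert ``decay of correlations against Hölder/Lipschitz observables in the $\mu$-picture'' into the stated inequality (\ref{qwq}) in the $m$-picture. For this I would verify that $\varphi = f/h$ is an admissible test function: $f \in C^1([0,1])$ so $f$ is Lipschitz; $h \ge \gamma_0 > 0$ so $1/h \le 1/\gamma_0$; and $h \in C_{A_*}$ is monotone decreasing, hence of bounded variation and continuous away from $0$, which is enough to make $\varphi$ bounded with $\|\varphi\|_\infty \le \|f\|_\infty/\gamma_0$ and to make it lie in whichever regularity class (Lipschitz, Hölder, BV) the tower estimate requires — the key point being that $\varphi$ inherits enough regularity from $f$ and from the lower bound on $h$, with the constant $C_f$ in (\ref{qwq}) absorbing $\|f\|_{C^1}$, $\gamma_0$, and the cone constant $A_*$. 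Then $\int f\,(g\circ T^n)\,dm = \int_\mu \varphi\,(g\circ T^n) = \big(\int_\mu \varphi\,(g\circ T^n) - (\int_\mu \varphi)(\int_\mu g)\big)$, and the bracket is bounded by $C_\varphi \|g\|_\infty n^{-\gamma}$ by the decay of correlations, giving (\ref{qwq}) with $C_f := C_\varphi$.

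The main obstacle I anticipate is the regularity transfer at the indifferent fixed point: $\varphi = f/h$ need not be Lipschitz (or even Hölder) up to $x = 0$, because $h$ can blow up like $x^{-\alpha}$ there, so $\varphi$ may only be bounded near $0$ without controlled modulus of continuity. If the LSV decay-of-correlations statement we invoke genuinely needs Hölder regularity of the first observable everywhere, one must either (a) appeal to a sharper version of the tower estimate that only requires the observable to be Hölder/Lipschitz away from a neighbourhood of the neutral point and merely bounded (with $L^\infty$ control sufficing near $0$, since the tower base can be taken in the ``good'' region) — such versions are standard — or (b) approximate $\varphi$ by a genuinely regular $\tilde\varphi$ agreeing with it outside a small ball $B_\delta(0)$ and control the error $\int_{B_\delta(0)} |\varphi - \tilde\varphi|\,(g\circ T^n)\,h\,dm$ by $\mu(B_\delta(0))\,\|g\|_\infty$ times a constant, optimising $\delta$ against $n$. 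Either route is routine once the tower machinery is in hand; the substantive input is the tail estimate $m(\{\text{escape time from parabolic region}\} > k) = O(k^{-1/\alpha})$, which follows from (\ref{MP}) and the distortion bounds, and which the cited results already supply for the class $M_0$.
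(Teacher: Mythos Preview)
The paper does not prove this theorem: it is introduced with the sentence ``The following is proved in \cite{LSV}'' and is simply quoted from Liverani--Saussol--Vaienti. There is no proof in the paper to compare your proposal against.

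Regarding the content of your sketch: the identity $\int f\,(g\circ T^n)\,dm=\int_\mu (f/h)\,(g\circ T^n)$ and the observation that $\int_\mu(f/h)=\int f\,dm=0$ are correct, but what you then invoke as input --- polynomial decay of correlations for $(T,\mu)$ against regular observables --- \emph{is} the theorem of \cite{LSV} (for the same map $T$, merely stated with respect to $\mu$ instead of $m$), not a logically prior fact; so the reduction is essentially circular unless you independently build the Young tower for $T$ and run the tail estimate, which you only gesture at. The regularity obstacle you flag is genuine: since $h(x)\asymp x^{-\alpha}$ near $0$, the function $\varphi=f/h\asymp x^{\alpha}f(x)$ has derivative of order $x^{\alpha-1}$ there, so it is neither Lipschitz nor $\eta$-H\"older for any $\eta>\alpha$. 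Your fix (b), cutting off near $0$ and optimising the cutoff scale against $n$, does work and is in fact very close in spirit to what the present paper carries out later (the proof immediately preceding the subsection ``Applying Theorem~\ref{gen}'') when it passes from Lipschitz observables to the weighted norm $\|\cdot\|_{\alpha}$; but note that this still presupposes the LSV decay bound as a black box. For the record, the argument in \cite{LSV} itself proceeds neither via Young towers nor via the substitution $\varphi=f/h$: it shows directly that the transfer operator preserves a cone of densities (essentially the cone appearing in Lemma~\ref{Mdue}) and obtains the polynomial rate from a renewal-type estimate on that cone.
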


Similarly to what is done in \cite{CCS}, Appendix B, for the decay of
correlations, we show that by the uniform boundedness principle it is
possible to see that a statement like (\ref{qwq}) implies a statement on the
convergence to equilibrium which is uniform also in $f$:

\begin{proposition}
\label{6}Let us consider a system satisfying a non uniform convergence to
equilibrium estimation, as in (\ref{qwq}), then there is a constant $C_{2}$
\ such that for each $g\in L^{\infty },f\in C^{1},$ $\int f~dx=0$, $0<\gamma
,$ then%
\begin{equation}
|\int f~g\circ T^{n}~dm|\leq C_{2}||f||_{C^{1}}||g||_{\infty }n^{-\gamma }
\label{qwq1}
\end{equation}%
for some constant $C_{2}.$
\end{proposition}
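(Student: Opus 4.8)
The plan is to apply the uniform boundedness principle (Banach--Steinhaus) exactly as indicated by the reference to \cite{CCS}, Appendix B. Fix $n$ and $g\in L^{\infty}$ and consider the family of functionals on the Banach space $C^{1}([0,1])$ (or rather on its closed subspace of zero-average functions, which is again Banach) given by
\begin{equation*}
\Phi_{n,g}(f):=\int f\,(g\circ T^{n})\,dm,\qquad f\in C^{1},\ \int f\,dx=0.
\end{equation*}
Each $\Phi_{n,g}$ is clearly linear and bounded in $f$ for the $C^{1}$ norm, with a bound depending a priori on $n$ and $\|g\|_{\infty}$; we do not yet know it is bounded uniformly in $n$. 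To invoke Banach--Steinhaus we instead rescale: for fixed $f$, the estimate (\ref{qwq}) says $\sup_{n}\, n^{\gamma}\,|\Phi_{n,g}(f)|\leq C_{f}\|g\|_{\infty}$, i.e. the family $\{\,n^{\gamma}\Phi_{n,g}\,:\, n\in\mathbb N\,\}$ is pointwise bounded on the zero-average subspace of $C^{1}$ for each fixed $g$. One more normalization handles the dependence on $g$: dividing by $\|g\|_{\infty}$, the family $\{\, n^{\gamma}\Phi_{n,g}/\|g\|_{\infty}\, :\, n\in\mathbb N,\ g\in L^{\infty},\ g\neq 0\,\}$ is pointwise bounded in $f$. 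By the uniform boundedness principle this family is then uniformly bounded in operator norm by some constant $C_{2}$, which gives exactly
\begin{equation*}
|\Phi_{n,g}(f)|\le C_{2}\,\|f\|_{C^{1}}\,\|g\|_{\infty}\,n^{-\gamma}
\end{equation*}
for all $n$, all such $f$, and all $g$, as claimed.

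The one point that requires a little care — and which I expect to be the main (mild) obstacle — is the completeness of the domain space: Banach--Steinhaus needs the family to be defined on a Banach space, and $C^{1}([0,1])$ with $\|\cdot\|_{C^{1}}$ is complete, while the subspace $\{f\in C^{1}:\int f\,dx=0\}$ is closed in it (the average is a bounded linear functional), hence also a Banach space, so this is fine. The other thing to check is that each rescaled functional $n^{\gamma}\Phi_{n,g}/\|g\|_{\infty}$ really is bounded on $C^{1}$ for fixed $n$ and $g$ — but this is immediate since $|\Phi_{n,g}(f)|\le \|f\|_{L^{1}}\|g\circ T^{n}\|_{\infty}\le \|f\|_{C^{1}}\|g\|_{\infty}$, the bound $n^{\gamma}$ being harmless for a single fixed $n$. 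With these two routine verifications in place, the statement of (\ref{qwq}) supplies the pointwise bound and Banach--Steinhaus upgrades it to the uniform bound (\ref{qwq1}). The value $C_{2}$ produced is not explicit, but the proposition only asserts existence of such a constant, so this is all that is needed. The parameter range on $\gamma$ is inherited from (\ref{qwq}): the estimate is meaningful precisely for those $\gamma>0$ for which (\ref{qwq}) holds, i.e. $0<\gamma<\frac1\alpha-1$, and the statement should be read with that understanding.
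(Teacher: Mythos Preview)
Your proposal is correct and follows essentially the same route as the paper: rescale the correlation functionals by $n^{\gamma}$ and by $\|g\|_{\infty}$, observe that the hypothesis (\ref{qwq}) gives a pointwise bound in $f$, and apply the uniform boundedness principle to upgrade to a bound uniform in $f$. The only cosmetic difference is that the paper packages the rescaled quantities as seminorms $q_{(n,g)}(f)=n^{\gamma}\bigl|\int f\,g\circ T^{n}\,dm\bigr|$ (with $g$ ranging over the unit ball of $L^{\infty}$) and cites Kato's seminorm version of the principle, whereas you keep the functionals linear and invoke the standard Banach--Steinhaus; your explicit checks that the zero-average subspace is closed in $C^{1}$ and that each individual functional is bounded are points the paper leaves implicit.
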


\begin{proof}
Let $B_{L^{\infty }}$ be the unit ball in $L^{\infty }$. For any $(n,g)\in 
\mathbb{N\times }B_{L^{\infty }}$, let us consider the family of\
non-negative functions defined by%
\begin{equation*}
q_{(n,g)}(f)=n^{-\gamma }|\int fg\circ T^{n}~dm|.
\end{equation*}

For any $f_{1},f_{2},f_{3}\in C^{1}$ it holds%
\begin{equation*}
q_{(n,g)}(f_{1}+f_{2})\leq q_{(n,g)}(f_{1})+q_{(n,g)}(f_{2})
\end{equation*}%
and%
\begin{equation*}
q_{(n,g)}(-f_{3})=q_{(n,g)}(f_{3}).
\end{equation*}%
Moreover it follows from \ref{qwq} that for each $f\in C^{1}$%
\begin{equation*}
\sup_{\mathbb{N\times }B_{L^{\infty }}}q_{(n,g)}(f)\leq C_{f}<\infty .
\end{equation*}%
By the uniform boundedness principle then (see \cite{K}, Theorem 1.29, page
139 ) it follows%
\begin{equation*}
\sup_{\mathbb{N\times }B_{L^{\infty }},||f||_{C_{1}}\leq
1}q_{(n,g)}(f):=C_{2}<\infty .
\end{equation*}%
Which implies \ref{qwq1}
\end{proof}

\begin{remark}
The above proof generalizes to any speed of convergence to equilibrium.
Moreover, since for any Lipschitz funcion $f$ there is $\hat{f}\in C^{1}$
such that $||\hat{f}||_{C^{1}}\leq ||f||_{Lip}$ (where $||~||_{Lip}$ denotes
the Lipschitz norm) and $||f-\hat{f}||_{1}\leq \epsilon $ ($g\in L^{\infty
}, $ thus approximating $f$ in $L^{1}$ is enough to approximate the integral
of the product) then the above theorem also hold for Lipschitz functions $f$
with the Lipschitz norm:%
\begin{equation*}
|\int f~g\circ T^{n}~dm|\leq C_{2}||f||_{Lip}||g||_{\infty }n^{-\gamma }.
\end{equation*}
\end{remark}

\subsection{Application of Theorem \protect\ref{gen} to maps with
indifferent fixed points}

To apply Theorem \ref{gen} we need some technical work to verify the
required assumptions and provide the required estimations. This will be done
in this section and in the following ones. Let us consider the following
norm:%
\begin{equation*}
||f||_{\alpha }=\max (\limfunc{esssup}_{x\in (0,1]}|x^{\alpha }f(x)|,%
\limfunc{esssup}_{x\in (0,1]}|x^{\alpha +1}f^{\prime }(x)|).
\end{equation*}%
We show now how Theorem \ref{gen} can be applied to our class of maps,
considering $||~||_{\alpha }$ and $||~||_{1}$ as strong and weak norms.

The first step is to extend the result of Proposition \ref{6} from Lipschitz
observables \ to a class including functions with finite $||~||_{\alpha }$
norm.

\subsubsection{Convergence to equilibrium for the $||~||_{\protect\alpha }$
norm (item 1 of Theorem \protect\ref{gen})}

In the following proposition we show how a relation like Proposition \ref{6}
implies an estimation of the convergence to equilibrium, as Definition \ref%
{d1} with respect to $||~||_{1}$ and the strong norm $||~||_{\alpha }$

\begin{proposition}
If for a nonsingular function $T$ and some $\gamma >0,$ $C_{2}\geq 0,$it
holds that for each$~g\in L^{\infty }$ and $f\in Lip[0,1],$ with $\int
f~dx=0 $, 
\begin{equation}
|\int f~g\circ T^{n}~dm|\leq C_{2}||f||_{Lip}||g||_{\infty }n^{-\gamma };
\label{7}
\end{equation}%
then there is $C_{4}\geq 0$ such that%
\begin{equation*}
||L^{n}f||_{1}\leq C_{4}n^{-\frac{\gamma }{2}(1-\alpha )}||f||_{\alpha }
\end{equation*}%
where $L$ is the transfer operator associated to $T$.
\end{proposition}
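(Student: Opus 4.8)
The goal is to convert the "convergence to equilibrium tested against Lipschitz observables" estimate (\ref{7}) into an honest $L^1$-decay statement $\|L^n f\|_1 \le C_4 n^{-\frac{\gamma}{2}(1-\alpha)}\|f\|_\alpha$. The plan is to estimate $\|L^n f\|_1$ by duality: $\|L^n f\|_1 = \sup_{\|g\|_\infty \le 1} \int (L^n f)\, g\, dm = \sup_{\|g\|_\infty\le 1} \int f \, (g\circ T^n)\, dm$, so it suffices to bound $|\int f\, g\circ T^n\, dm|$ uniformly over $\|g\|_\infty\le 1$ in terms of $\|f\|_\alpha$ and a power of $n$.

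The obstacle is that $f$ with finite $\|\cdot\|_\alpha$ norm need not be Lipschitz: near $0$ it may blow up like $x^{-\alpha}$ (from Remark \ref{cone}, $|f(x)|\le \|f\|_\alpha x^{-\alpha}$) and its derivative like $x^{-\alpha-1}$. So I would split the integral at a cutoff point $\delta>0$ to be optimized: write $f = f\mathbf{1}_{[0,\delta]} + f\mathbf{1}_{[\delta,1]}$. On $[0,\delta]$ I control things crudely: $|\int_{[0,\delta]} f\, g\circ T^n\, dm| \le \|g\|_\infty \int_0^\delta |f|\, dm \le \|g\|_\infty \|f\|_\alpha \int_0^\delta x^{-\alpha}\, dx = \frac{\|f\|_\alpha}{1-\alpha}\delta^{1-\alpha}$. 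On $[\delta,1]$ the function $f$ is Lipschitz (its derivative is bounded by $\|f\|_\alpha\, \delta^{-\alpha-1}$ and $|f|\le \|f\|_\alpha\,\delta^{-\alpha}$ there), but I cannot directly apply (\ref{7}) because $f\mathbf{1}_{[\delta,1]}$ has a jump at $\delta$ and need not have zero average. I would instead take a genuine Lipschitz function $\tilde f$ that agrees with $f$ on $[2\delta,1]$, is zero near $0$, interpolates linearly (or smoothly) on $[\delta,2\delta]$, and then subtract a constant to make $\int \tilde f\, dm = 0$; the correcting constant is $O(\delta^{1-\alpha})\|f\|_\alpha$ since $\int_0^1 f\, dm - \int \tilde f\, dm$ differs only by what happens on $[0,2\delta]$, which is $O(\delta^{1-\alpha})\|f\|_\alpha$ (using $\int f\,dm$ is itself bounded — here one uses that $\int_0^x f\, dm \le \|f\|_\alpha \frac{x^{1-\alpha}}{1-\alpha}$, or more simply that $f\in L^1$ with norm controlled by $\|f\|_\alpha$). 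The Lipschitz norm of $\tilde f$ is of order $\|f\|_\alpha\,\delta^{-\alpha-1}$ (the interpolation slope across a gap of width $\delta$ between values of size $\delta^{-\alpha}$). Then (\ref{7}) gives $|\int \tilde f\, g\circ T^n\, dm| \le C_2 \|f\|_\alpha\, \delta^{-\alpha-1}\, n^{-\gamma}$, and the remaining differences ($\int (f-\tilde f) g\circ T^n$ and the constant correction) are each $O(\delta^{1-\alpha})\|f\|_\alpha \|g\|_\infty$ by the crude $L^1$ bound on $[0,2\delta]$.

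Collecting terms, $\|L^n f\|_1 \le \mathrm{const}\cdot \|f\|_\alpha\,(\delta^{1-\alpha} + \delta^{-\alpha-1} n^{-\gamma})$. Now optimize over $\delta$: the two terms balance when $\delta^{1-\alpha} = \delta^{-\alpha-1} n^{-\gamma}$, i.e. $\delta^{2} = n^{-\gamma}$, so $\delta = n^{-\gamma/2}$, giving $\delta^{1-\alpha} = n^{-\frac{\gamma}{2}(1-\alpha)}$, which is exactly the claimed exponent. Choosing $C_4$ to absorb $C_2$, $\frac{1}{1-\alpha}$, and the interpolation constants finishes the proof.

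The only genuinely delicate point is the bookkeeping in the second paragraph: producing the honest Lipschitz zero-average surrogate $\tilde f$ and checking that both its Lipschitz norm ($\sim \|f\|_\alpha\,\delta^{-\alpha-1}$) and the error of replacing $f$ by $\tilde f$ ($\sim \|f\|_\alpha\,\delta^{1-\alpha}$) have the right dependence on $\delta$ and on $\|f\|_\alpha$. Everything else is the crude $L^1$ tail bound near $0$ via Remark \ref{cone} and the one-line optimization in $\delta$.
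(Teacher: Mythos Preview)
Your proposal is correct and follows essentially the same approach as the paper: split at a cutoff $\delta$ (the paper calls it $q$), replace $f$ near $0$ by a Lipschitz surrogate with zero average, bound the tail by $O(\delta^{1-\alpha}\|f\|_\alpha)$ and the Lipschitz norm by $O(\delta^{-\alpha-1}\|f\|_\alpha)$, then optimize $\delta=n^{-\gamma/2}$. The only cosmetic difference is the choice of surrogate---the paper takes $f_q(x)=q^{-1}f(q)x$ on $[0,q]$ rather than your zero-then-interpolate construction on $[0,2\delta]$---but the resulting estimates and the optimization are identical.
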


\begin{proof}
Suppose $||g||_{\infty }\leq 1$. There is no loss of generality in supposing 
$f(0)\geq 0$. Let $q\in (0,1]$ small enough such that $f|_{(0,q]}\geq 0$ $,$%
\begin{eqnarray*}
|\int f~g\circ T^{n}~dm| &\leq &|\int_{0}^{q}f~g\circ
T^{n}~dm+\int_{q}^{1}f~g\circ T^{n}~dm| \\
&\leq &||f1_{[0,q]}||_{1}+\frac{1}{2}q^{-\alpha +1}||f||_{\alpha
}+|\int_{0}^{1}f_{q}~g\circ T^{n}~dm|
\end{eqnarray*}%
where $f_{q}(x)=\left\{ 
\begin{array}{c}
q^{-1}f(q)x~\text{for\ }x\leq q \\ 
f(x)~\text{for\ }x\geq q%
\end{array}%
\right. $. Since we have convergence to equilibrium for functions of zero
average we condider $\hat{f}_{q}(x)=f_{q}(x)-\int f_{q}(x)~dm$ . Since $\int
f~dm=0$ we have that $|\int f_{q}(x)~dm|\leq ||f1_{[0,q]}||_{1}+\frac{1}{2}%
q^{-\alpha +1}||f||_{\alpha }$.

Hence 
\begin{eqnarray*}
||L^{n}f||_{1} &\leq &\sup_{||g||_{\infty }\leq 1}|\int f~g\circ T^{n}~dm| \\
&\leq &2||f1_{[0,q]}||_{1}+q^{-\alpha +1}||f||_{\alpha }+||L^{n}\hat{f}%
_{q}||_{1} \\
&\leq &||f||_{\alpha }2\frac{2-\alpha }{1-\alpha }q^{-\alpha
+1}+||L^{n}f_{q}||_{1}.
\end{eqnarray*}%
Since $f_{q}(x)$ is Lipschitz and $||\hat{f}_{q}||_{lip}\leq ||f||_{\alpha
}q^{-\alpha -1}$, by \ref{7}%
\begin{eqnarray*}
||L^{n}\hat{f}_{q}||_{1} &\leq &C_{2}||\hat{f}_{q}||_{Lip}n^{-\gamma } \\
&\leq &C_{2}||f||_{\alpha }q^{-\alpha -1}n^{-\gamma }.
\end{eqnarray*}%
and then 
\begin{eqnarray*}
||L^{n}f||_{1} &\leq &||f||_{\alpha }2\frac{2-\alpha }{1-\alpha }q^{-\alpha
+1}+C_{2}||f||_{\alpha }q^{-\alpha -1}n^{-\gamma } \\
&\leq &||f||_{\alpha }(2\frac{2-\alpha }{1-\alpha }q^{-\alpha
+1}+C_{2}q^{-\alpha -1}n^{-\gamma })
\end{eqnarray*}

choosing $q=\sqrt{n^{-\gamma }}$, then%
\begin{equation*}
||L^{n}f||_{1}\leq C_{4~}||f||_{\alpha }n^{-\frac{\gamma }{2}(1-\alpha )}
\end{equation*}
\end{proof}

\subsubsection{Applying Theorem \protect\ref{gen}}

We can now apply Theorem \ref{gen}, for suitable perturbations, obtaining
that the physical invariant measure varies continuously, with a Holder
continuity modulus. In this we will consider the norm $||~||_{\alpha }$ as a
strong norm and the $L^{1}$ norm as a weak one.

We remark that the transfer operators are $L^{1}$ contractions then the
constant at item 3 of Theorem \ref{gen} satisfies $\tilde{C}=1$. By Theorem %
\ref{gen} it follows:

\begin{proposition}
\label{pto}Let us suppose that $T_{0}\in M_{0}(\alpha ,c,C,d)$ at beginning
of Section \ref{mann}, let $L_{0}$ be its transfer operator and $f_{0}$ its
absolutely continuous invariant measure. Let $T_{1}\in M(\alpha ,c,C,d)$,
let $L_{1}$ be its transfer operator and $f_{1}$ its absolutely continuous
invariant measure. If $T_{0}$ and $T_{1}$ are such that%
\begin{equation}
\max (||f_{1}||_{\alpha },||f_{0}||_{\alpha })\leq M;  \label{12112}
\end{equation}

and $L_{1}$ is a small perturbation of $L_{0}$, in the following sense: 
\begin{equation}
\underset{||f||_{\alpha }\leq 1}{\sup }||(L_{1}-L_{0})f||_{1}\leq \epsilon
\label{1221}
\end{equation}

then for each $0<\gamma <\frac{1}{\alpha }-1$ there is $K_{2}$, depending on 
$M,\tilde{C}=1$ (see Remark \ref{k1}) and $T_{0}$, but not on $\epsilon $
such that 
\begin{equation*}
||f_{1}-f_{0}||_{1}\leq K_{2}\epsilon ^{1-\frac{1}{\frac{\gamma }{2}%
(1-\alpha )+1}}.
\end{equation*}
\end{proposition}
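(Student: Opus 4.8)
The plan is to apply Theorem \ref{gen} directly, verifying its four hypotheses one by one with the strong norm $||~||_{\alpha}$ and the weak norm $||~||_{1}$. Hypotheses 2 and 4 are exactly the assumptions (\ref{12112}) and (\ref{1221}) of the proposition, so nothing is needed there beyond renaming. Hypothesis 3 is the remark already made just above the statement: transfer operators are $L^{1}$ contractions, so $||L_{0}^{n}||_{B_{w}\rightarrow B_{w}}\leq 1$ and we may take $\tilde{C}=1$. So the real content is hypothesis 1, the convergence-to-equilibrium estimate in the sense of Definition \ref{d1}.

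For hypothesis 1, I would chain together the results of the previous subsections. Since $T_{0}\in M_{0}(\alpha,c,C,d)$, the theorem quoted from \cite{LSV} gives the non-uniform decay estimate (\ref{qwq}) for any $0<\gamma<\frac{1}{\alpha}-1$; Proposition \ref{6} upgrades this, via the uniform boundedness principle, to the $f$-uniform estimate (\ref{qwq1}), and the remark following it extends this to Lipschitz observables with the Lipschitz norm, i.e. exactly hypothesis (\ref{7}) of the last proposition of the previous subsection. That proposition then yields
\begin{equation*}
||L_{0}^{n}f||_{1}\leq C_{4}\,n^{-\frac{\gamma}{2}(1-\alpha)}||f||_{\alpha}
\end{equation*}
for all $f$ with $\int f\,dx=0$, which is precisely convergence to equilibrium with respect to $||~||_{1}$, $||~||_{\alpha}$ and speed $\phi(n)=C_{4}n^{-\frac{\gamma}{2}(1-\alpha)}$. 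This is a power law with exponent $\beta:=\frac{\gamma}{2}(1-\alpha)$, so hypothesis 1 of Theorem \ref{gen} holds with this $\phi$ (extended to a decreasing continuous function on $\mathbb{R}$ in the obvious way).

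With all four hypotheses in place, Theorem \ref{gen} gives $||f_{1}-f_{0}||_{1}\leq(2M+M\tilde{C})\epsilon(\psi^{-1}(\epsilon)+1)$ with $\psi(x)=\phi(x)/x$. Since $\phi$ is a power law with exponent $\beta$, Remark \ref{k1} applies verbatim and converts this into
\begin{equation*}
||f_{1}-f_{0}||_{1}\leq K_{2}\,\epsilon^{1-\frac{1}{\beta+1}}=K_{2}\,\epsilon^{1-\frac{1}{\frac{\gamma}{2}(1-\alpha)+1}},
\end{equation*}
where the constant $K_{2}$ absorbs $M$, $\tilde{C}=1$, and $C_{4}$ (the latter depending only on $T_{0}$ through \cite{LSV} and the constants in Proposition \ref{6}), and in particular does not depend on $\epsilon$. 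This is the claimed bound. There is no real obstacle here — the proposition is essentially a bookkeeping assembly of the machinery built in the preceding pages; the only point requiring a little care is checking that $\gamma$ can indeed be taken in the full range $(0,\frac{1}{\alpha}-1)$, which is guaranteed because the $M_{0}$ hypothesis on $T_{0}$ is exactly what \cite{LSV} needs for (\ref{qwq}), and that the resulting $\phi$ meets the mild regularity (continuous, decreasing to $0$) demanded by Theorem \ref{gen}.
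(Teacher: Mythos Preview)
Your proposal is correct and follows exactly the approach of the paper: the paper states the proposition as an immediate consequence of Theorem~\ref{gen} (with $\tilde{C}=1$ from the $L^{1}$-contraction remark just above), having already established item~1 via the chain LSV $\Rightarrow$ Proposition~\ref{6} $\Rightarrow$ Lipschitz remark $\Rightarrow$ the $\|L^{n}f\|_{1}\leq C_{4}n^{-\frac{\gamma}{2}(1-\alpha)}\|f\|_{\alpha}$ proposition, and then invoking Remark~\ref{k1} for the power-law exponent. Your write-up is simply a more explicit unpacking of what the paper compresses into the single phrase ``By Theorem~\ref{gen} it follows''.
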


We now show how assumptions (\ref{12112}) and (\ref{1221}) can be naturally
supposed for deterministic perturbations in our class of maps, and we get an
explicit estimation of $M$.

\subsubsection{Bound on the strong norm of the invariant measures (item 2 of
Theorem \protect\ref{gen})}

In this section we show how it is possible to have an estimation of the $%
||~||_{\alpha }$ norm of the absolutely continuous invariant measures of
maps in the set $M(\alpha ,c,C,d)$.

To estimate the $||~||_{\alpha }$ norm of this invariant density, we need to
have an estimation on its derivative. In \cite{LSV} the following is proved:

\begin{lemma}
\label{Mdue}Let $T$ $\in $ $M(\alpha ,c,C,d)$. and $h$ is its a.c.i.m; let $%
c_{T}=||T^{\prime }||_{\infty }$ consider $K_{T},a_{T},b_{T}\in \mathbb{R}$
such that:%
\begin{equation}
K_{T}=\sup_{x}(x^{\alpha -1}T(x));  \label{M2}
\end{equation}%
\begin{equation}
a_{T}>\sup_{x}(\frac{4CK_{T}}{|D_{x}T|^{2}});  \label{M3}
\end{equation}%
\begin{equation}
b_{T}>a_{T}\sup_{x}(\frac{2c_{T}T(x)-x|D_{x}T|}{(|D_{x}T|-2c_{T})T(x)x}).
\label{M4}
\end{equation}%
Then the cone%
\begin{equation*}
C_{0}=\{f\in C^{1}(]0,1])~|~0\leq f(x)\leq
2h(x)\int_{0}^{1}f~dx~;~|f^{\prime }(x)|\leq \frac{a_{T}+b_{T}x}{x}f(x)\}
\end{equation*}%
is invariant for the transfer operator$.$
\end{lemma}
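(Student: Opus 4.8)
The plan is to show that the cone $C_0$ defined via $0\le f(x)\le 2h(x)\int f\,dx$ and $|f'(x)|\le \frac{a_T+b_Tx}{x}f(x)$ is mapped into itself by the transfer operator $L_T$, by checking the two defining inequalities separately on $L_Tf$. First I would write $L_Tf$ explicitly through the inverse branches of $T$: since $T$ has two monotone onto branches $T_1=T|_{[0,\overline d)}$ and $T_2=T|_{[\overline d,1)}$, we have $(L_Tf)(x)=\sum_{i}\frac{f(y_i(x))}{T'(y_i(x))}$ where $y_i(x)=T_i^{-1}(x)$. The first bound, positivity $0\le (L_Tf)(x)$, is immediate from $f\ge 0$ and $T'>0$; the upper bound $(L_Tf)(x)\le 2h(x)\int (L_Tf)\,dx=2h(x)\int f\,dx$ follows because $h$ is invariant, so $L_T(2h\!\int\! f\,dx -f)=2h\!\int\! f\,dx - L_Tf\ge0$ once we know $2h\!\int f\,dx-f\ge0$, i.e. this part is automatic and uses only the invariance of $h$ and linearity/positivity of $L_T$.

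The substantive part is the derivative estimate: one must differentiate $(L_Tf)(x)=\sum_i \frac{f(y_i(x))}{T'(y_i(x))}$, obtaining terms involving $f'(y_i)$, $T'(y_i)$ and $T''(y_i)$ after using $y_i'(x)=1/T'(y_i(x))$. Using the hypothesis $|f'(y)|\le \frac{a_T+b_Ty}{y}f(y)$ together with the structural bounds from $M(\alpha,c,C,d)$ — namely $|T''(x)|\le \overline C x^{\overline\alpha-1}\le C x^{\alpha-1}$, $T'(x)\le \overline C\le C$, $T'\ge 1$, and the comparison $T(x)\ge x$ together with $K_T=\sup_x x^{\alpha-1}T(x)$ giving $x^{\alpha-1}\le K_T/T(x)$, hence $x^{\alpha-1}\le K_T$ on bounds where $T(x)\ge$ const — one estimates each summand of $|(L_Tf)'(x)|$ by a multiple of $\frac{1}{x}(L_Tf)(x)$-type quantity. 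The point of the specific choices \eqref{M3}, \eqref{M4} is precisely to make the resulting coefficient collapse back to $\frac{a_T+b_Tx}{x}$: the condition $a_T>\sup_x \frac{4CK_T}{|D_xT|^2}$ is what controls the contribution of the second-derivative (distortion) term $\frac{f(y)|T''(y)|}{T'(y)^2}$ relative to $\frac{a_T}{x}$, while $b_T$ is chosen large enough (inequality \eqref{M4}) to absorb the remaining first-order terms coming from $\frac{(a_T+b_Ty)}{y}f(y)$ after the change of variables, using $y=T_i^{-1}(x)\le x/\text{(something)}$ estimates and $T'(y)-2c_T$ type denominators.

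Concretely, the key steps in order are: (1) write $L_Tf$ via the branches and record $y_i'=1/T'(y_i)$; (2) verify the positivity and the $2h\int f$ upper bound using invariance of $h$; (3) compute $(L_Tf)'$ and bound it summand-by-summand, isolating a "distortion" term $\propto f(y_i)|T''(y_i)|/T'(y_i)^2$ and a "transport" term $\propto |f'(y_i)|/T'(y_i)$; (4) insert the cone hypothesis on $f'$ and the $M(\alpha,c,C,d)$ bounds, bounding $x^{\alpha-1}$-type factors by $K_T$-related constants and $y_i$ by $x$; (5) check that the constants $a_T,b_T$ satisfying \eqref{M3}, \eqref{M4} make the total $\le \frac{a_T+b_Tx}{x}(L_Tf)(x)$, closing the cone. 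The main obstacle is step (3)–(5): carrying the change of variables through the derivative so that all error terms reassemble into exactly the affine-over-$x$ coefficient, which is why the precise form of the suprema in \eqref{M3} and \eqref{M4} is needed rather than just "some large constant"; the weak convexity of the branches ($T''\ge0$, i.e. $T'$ increasing) is used to control the sign and monotonicity of $T'$ along a branch so that $T'(y_i)\ge 1$ and the bounds on $T'(y_i)-2c_T$, $T(y_i)$ versus $y_i$ can be applied uniformly.

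Once this lemma is in hand, the $\|\cdot\|_\alpha$ bound on the invariant density follows by combining it with Remark \ref{cone} (which gives $h(x)\le A x^{-\alpha}$, hence $x^\alpha h(x)\le A$) and the derivative bound $|h'(x)|\le \frac{a_T+b_Tx}{x}h(x)\le (a_T+b_T)\frac{h(x)}{x}$, so that $x^{\alpha+1}|h'(x)|\le (a_T+b_T)x^\alpha h(x)\le (a_T+b_T)A$; taking the max gives a uniform $M$ depending only on the parameters $\alpha,c,C,d$ (through $K_T,a_T,b_T,A_*$), which is exactly what is required for hypothesis \eqref{12112} of Proposition \ref{pto}.
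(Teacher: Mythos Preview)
Your outline is correct and is essentially the same approach as the paper's. The only difference is one of packaging: the paper does not carry out steps (1)--(4) itself but instead invokes \cite{LSV}, Lemma~5.1, which already gives the inequality
\[
|(Lf)'(x)|\le \frac{a+bx}{x}\,(Lf)(x)\,\sup_{y\in[0,1]}\Bigl[\frac{2C}{|D_yT|^2}\frac{y^{\alpha-1}T(y)}{a+bT(y)}+\frac{T(y)}{y|D_yT|}\frac{a+by}{a+bT(y)}\Bigr],
\]
and then only does your step (5): it bounds the two summands in the bracket by $\tfrac12$ and $\tfrac{1}{2c_T}$ respectively, and solves the resulting algebraic inequalities to obtain precisely the conditions \eqref{M3} and \eqref{M4}. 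So where you propose to differentiate $L_Tf$ and reassemble the distortion and transport terms by hand, the paper takes that computation as a black box from \cite{LSV} and focuses solely on extracting explicit constants. Your final paragraph (passing from the cone condition to a bound on $\|h\|_\alpha$) is not part of this lemma in the paper; it is exactly the content of the next Proposition.
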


\begin{proof}
In \cite{LSV} (Lemma 5.1) is proved that, if $f\in C_{0}$ then%
\begin{equation*}
|(Lf)^{\prime }(x)|\leq \frac{a-bx}{x}Lf(x)\underset{y\in \lbrack 0,1]}{\sup 
}[\frac{2C}{|D_{y}T|^{2}}\frac{y^{\alpha -1}T(y)}{a+bT(y)}+\frac{T(y)}{%
y|D_{y}T|}\frac{a+by}{a+bT(y)}].
\end{equation*}%
The result follows by remarking that for $a,\frac{b}{a}$ big enough%
\begin{equation*}
\underset{y\in \lbrack 0,1]}{\sup }[\frac{2C}{|D_{y}T|^{2}}\frac{y^{\alpha
-1}T(y)}{a+bT(y)}+\frac{T(y)}{y|D_{y}T|}\frac{a+by}{a+bT(y)}]<1.
\end{equation*}

We need to have an explicit estimation of $a,b$ in function of $T$ . Since $%
T(x)=x+\frac{c}{\alpha +1}x^{\alpha +1}+o(x^{\alpha +1})$ then $y^{\alpha
-1}T(y)=y^{\alpha }+\frac{c}{\alpha +1}y^{2\alpha }+o(y^{2\alpha })$ is
bounded.

Suppose $y^{\alpha -1}T(y)\leq K,$ then 
\begin{eqnarray*}
&&\frac{2C}{|D_{y}T|^{2}}\frac{y^{\alpha -1}T(y)}{a+bT(y)}+\frac{T(y)}{%
y|D_{y}T|}\frac{a+by}{a+bT(y)} \\
&\leq &\frac{2C}{|D_{y}T|^{2}}\frac{K}{a+bT(y)}+\frac{T(y)}{y|D_{y}T|}\frac{%
a+by}{a+bT(y)}.
\end{eqnarray*}

Now let us find a sufficient condition for which the two summands are less
or equal than $\frac{1}{2}$ and $\frac{1}{2c_{T}}$ (which is $\leq \frac{1}{2%
}$). We will obtain the conditions $\ref{M2},$ $\ref{M3},$ $\ref{M4}$.

For the first summand%
\begin{eqnarray*}
\frac{2C}{|D_{y}T|^{2}}\frac{K}{a+bT(y)} &\leq &\frac{1}{2} \\
\frac{4CK}{|D_{y}T|^{2}} &\leq &a+bT(y)
\end{eqnarray*}%
hence $\frac{4CK}{|D_{y}T|^{2}}\leq a$ is a sufficient condition. For the
second summand%
\begin{eqnarray*}
\frac{T(y)}{y|D_{y}T|}\frac{a+by}{a+bT(y)} &\leq &\frac{1}{2c_{T}} \\
2c_{T}T(y)a+2c_{T}T(y)by &\leq &y|D_{y}T|a+y|D_{y}T|bT(y) \\
2c_{T}T(y)a-y|D_{y}T|a &\leq &y|D_{y}T|bT(y)-2c_{T}T(y)by \\
a_{s}(2c_{T}T(y)-y|D_{y}T|) &\leq &yb(|D_{y}T|T(y)-2c_{T}T(y)) \\
\frac{2c_{T}T(y)-y|D_{y}T|}{(|D_{y}T|-2c_{T})T(y)y} &\leq &\frac{b}{a}.
\end{eqnarray*}
\end{proof}

By Remark \ \ref{cone} we have the following

\begin{proposition}
\label{norrm}Let $T\in M(\alpha ,c,C,C_{3},d),$ suppose $T$ satisfies the
assumptions of Lemma \ref{Mdue}. Then the unique a.c.i.m $h$ is such that%
\begin{equation*}
||h||_{\alpha }\leq \max (A_{\ast },A_{\ast }(a_{T}+b_{T})).
\end{equation*}
\end{proposition}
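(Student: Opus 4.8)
The plan is to bound separately the two quantities whose maximum defines $||h||_{\alpha}$, namely $\limfunc{esssup}_{x\in(0,1]}|x^{\alpha}h(x)|$ and $\limfunc{esssup}_{x\in(0,1]}|x^{\alpha+1}h'(x)|$: the first will come from membership of $h$ in the cone $C_{A_{\ast}}$ of \cite{Mr}, the second from membership of $h$ in the cone $C_{0}$ of Lemma \ref{Mdue}, and then the two pointwise bounds are glued together.

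First I would invoke the Proposition recalled just before Remark \ref{cone}: the unique invariant density $h$ of $T$ belongs to $C_{A_{\ast}}$ with $A_{\ast}=((1-\alpha)C_{3}d^{2+\alpha})^{-1}$. By Remark \ref{cone} this yields $h(x)\le A_{\ast}x^{-\alpha}$ for a.e. $x\in(0,1]$, i.e. $\limfunc{esssup}_{x\in(0,1]}|x^{\alpha}h(x)|\le A_{\ast}$, which settles the first term.

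Next I would argue that $h$ lies in the cone $C_{0}$ of Lemma \ref{Mdue}; in particular, by the very definition of $C_{0}$, one has $|h'(x)|\le \frac{a_{T}+b_{T}x}{x}\,h(x)$ for every $x\in(0,1]$. Since $T$ satisfies the hypotheses of Lemma \ref{Mdue}, that lemma gives that $C_{0}$ is invariant for the transfer operator $L$; since $h$ is precisely the density around which $C_{0}$ is built and the iterates of a suitable seed density in $C_{0}$ converge in $L^{1}$ to the unique a.c.i.m. $h$, one concludes $h\in C_{0}$ exactly as the invariant density is produced inside such an invariant cone in \cite{LSV}. Equivalently, one can note that the uniform bounds $0\le f\le 2h\int f\,dm$ and $|f'|\le\frac{a_{T}+b_{T}x}{x}f$ make the iterates $L^{n}f$ and their derivatives locally equibounded and equicontinuous on compact subsets of $(0,1]$, so $C_{0}$ is closed under $C^{1}_{\mathrm{loc}}$-limits and $h$, being such a limit of iterates, inherits the defining inequalities.

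Finally I would combine the two estimates: using $|h'(x)|\le\frac{a_{T}+b_{T}x}{x}h(x)$ together with $x^{\alpha}h(x)\le A_{\ast}$ gives, for a.e. $x\in(0,1]$,
\begin{equation*}
x^{\alpha+1}|h'(x)|\le x^{\alpha}(a_{T}+b_{T}x)h(x)\le A_{\ast}(a_{T}+b_{T}x)\le A_{\ast}(a_{T}+b_{T}),
\end{equation*}
hence $\limfunc{esssup}_{x\in(0,1]}|x^{\alpha+1}h'(x)|\le A_{\ast}(a_{T}+b_{T})$. Combined with the first step this gives $||h||_{\alpha}\le\max(A_{\ast},A_{\ast}(a_{T}+b_{T}))$, as claimed. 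The main obstacle is the middle step: verifying rigorously that the invariant density is $C^{1}$ on $(0,1]$ and that its logarithmic derivative obeys the $\frac{a_{T}+b_{T}x}{x}$ bound, i.e. that $h\in C_{0}$. Once this is granted (via the cone construction of \cite{LSV}, or via the compactness/closedness argument sketched above), the remaining computation is the elementary bookkeeping displayed here.
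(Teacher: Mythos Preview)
Your proposal is correct and follows essentially the same approach as the paper's proof: bound $x^{\alpha}h(x)$ by $A_{\ast}$ via Remark \ref{cone}, then use the derivative inequality $|h'(x)|\le\frac{a_T+b_Tx}{x}h(x)$ from the cone $C_0$ of Lemma \ref{Mdue} together with the first bound to get $x^{\alpha+1}|h'(x)|\le A_{\ast}(a_T+b_T)$. The paper's proof is terser and simply asserts the derivative inequality ``by Lemma \ref{Mdue}''; you are right to flag that the real content is verifying $h\in C_0$, and your sketch of how this follows from the cone construction in \cite{LSV} fills in exactly what the paper leaves implicit.
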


\begin{proof}
The condition $||h||_{\alpha }\leq A_{\ast }$ follows from $h(x)\leq A_{\ast
}x^{-\alpha }$ (see Remark \ref{cone}). By Lemma \ref{Mdue} we have 
\begin{equation*}
|h^{\prime }(x)|\leq \frac{a_{T}+b_{T}x}{x}h(x)\leq (a_{T}+b_{T}x)A_{\ast
}x^{-\alpha -1}.
\end{equation*}
\end{proof}

\subsubsection{Mixed norm for the difference of the operators for
deterministic perturbations (item 4 of Theorem \protect\ref{gen})}

In this section we see a family of perturbations of maps with an indifferent
fixed point, for which the difference $||(L_{\delta }-L)f||_{1}$ can be
estimated for each $f$ such that $||f||_{\alpha }\leq 1$, allowing the
application of Proposition \ref{pto}. These are examples of perturbations
for which we can have all the required estimations and apply our main
results. Essentially we allow perturbations wich are small in a weighted $%
L^{\infty }$ norm for the map and in the $L^{\infty }$ norm for its
derivative.

\begin{proposition}
Let us consider two maps $T_{0}$, $T_{1}\in $ $M(\alpha ,c,C,d)$, let $%
L_{0},L_{1}$ be their transfer operators. Let $T_{0,1}^{-1}$, $T_{0,2}^{-1}$%
, $T_{1,1}^{-1}$, $T_{1,2}^{-1}$ be the inverse branches of the two maps.\
Suppose that $T_{1}$ is a small perturbation of $T_{0}$ in the following
sense:

\begin{enumerate}
\item[N1] for $i\in \{1,2\},x\in \lbrack 0,1]~,~x^{-\alpha
-1}|T_{0,i}^{-1}(x)-T_{1,i}^{-1}(x)|\leq \epsilon ;$

\item[N2] for all $x\in \lbrack 0,1]~,~||T_{0}^{\prime }(x)-T_{1}^{\prime
}(x)||_{\infty }\leq \epsilon ;$
\end{enumerate}

then there is $C_{6}\geq 0$ not depending on $T_{1}$ such that%
\begin{equation*}
||(L_{0}-L_{1})f||_{1}\leq C_{6}~\epsilon ||f||_{\alpha }.
\end{equation*}
\end{proposition}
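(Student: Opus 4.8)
The plan is to write the transfer operator explicitly in terms of the inverse branches and then estimate the $L^1$-norm of the difference term by term. Recall that for a map $T$ with two increasing onto branches, the transfer operator acts as
\begin{equation*}
(L_T f)(x)=\sum_{i=1}^{2}f(T_i^{-1}(x))\,|(T_i^{-1})'(x)|
=\sum_{i=1}^{2}\frac{f(T_i^{-1}(x))}{T'(T_i^{-1}(x))}.
\end{equation*}
So $(L_0-L_1)f$ is a sum over the two branches of terms of the form $f(T_{0,i}^{-1}(x))|(T_{0,i}^{-1})'(x)|-f(T_{1,i}^{-1}(x))|(T_{1,i}^{-1})'(x)|$. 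I would add and subtract the mixed term $f(T_{1,i}^{-1}(x))|(T_{0,i}^{-1})'(x)|$ to split each branch contribution into (a) a part controlled by the variation of $f$ along the distance between the inverse branches, $|f(T_{0,i}^{-1}(x))-f(T_{1,i}^{-1}(x))|\cdot|(T_{0,i}^{-1})'(x)|$, and (b) a part controlled by the variation of the derivative weights, $|f(T_{1,i}^{-1}(x))|\cdot\big||(T_{0,i}^{-1})'(x)|-|(T_{1,i}^{-1})'(x)|\big|$.

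For part (a), I would use that $f$ has finite $\|f\|_\alpha$ norm, hence $|f(y)-f(z)|\le \int_z^y |f'(t)|\,dt \le \|f\|_\alpha \int_{\min}^{\max} t^{-\alpha-1}\,dt$, which integrates to something like $\frac{\|f\|_\alpha}{\alpha}(z^{-\alpha}-y^{-\alpha})$ when $z\le y$; combined with hypothesis N1, $|T_{0,i}^{-1}(x)-T_{1,i}^{-1}(x)|\le \epsilon x^{\alpha+1}$, and the bound $|(T_{0,i}^{-1})'(x)|\le 1$ (branches are expanding, $T'\ge 1$), this should give, after changing variables $u=T_{0,i}^{-1}(x)$ and integrating in $x$, a bound of the form $C\epsilon\|f\|_\alpha$. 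The delicate point is keeping the singular weights $x^{-\alpha}$ under control near $0$; the factor $x^{\alpha+1}$ from N1 is exactly what is needed to beat the $x^{-\alpha}$ growth of $f$ and its derivative, and the change of variables $u = T_{0,i}^{-1}(x)$ on the first branch (which near $0$ behaves like $x$ up to bounded distortion) must be done carefully so that $\int_0^1 u^{-\alpha}\,du < \infty$ remains finite. For part (b), I would write $|(T_{0,i}^{-1})'(x)|-|(T_{1,i}^{-1})'(x)|=\frac{1}{T_0'(T_{0,i}^{-1}(x))}-\frac{1}{T_1'(T_{1,i}^{-1}(x))}$, split it again (add and subtract $1/T_1'(T_{0,i}^{-1}(x))$) into a term bounded by N2 (difference of derivatives of the maps, uniformly $\le\epsilon$, divided by $T_0'T_1'\ge 1$) and a term bounded by the modulus of continuity of $T_1'$ times the distance N1 between the inverse branches; then multiply by $|f(T_{1,i}^{-1}(x))|\le \|f\|_\alpha (T_{1,i}^{-1}(x))^{-\alpha}$ and integrate, again using the change of variables to absorb the singularity.

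The main obstacle I anticipate is the bookkeeping near the indifferent fixed point $0$: all the relevant quantities ($f$, $f'$, possibly $T''$) blow up there, and one must verify that every integral $\int_0^1 (\cdots)\,dx$ that appears actually converges and produces a constant $C_6$ independent of $T_1$. This requires using the structural estimates in the definition of $M(\alpha,c,C,d)$ — in particular $T'(x)=1+\bar c x^{\bar\alpha}+o(x^{\bar\alpha})$ with $\bar\alpha\le\alpha$, $|T''(x)|\le \bar C x^{\bar\alpha-1}$, which bounds the modulus of continuity of $T_1'$ near $0$, and the resulting bounded-distortion control $T_{i}^{-1}(x)\asymp x$ near $0$ on the relevant branch. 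Once all four error integrals are shown to be finite with a uniform bound, collecting them gives $\|(L_0-L_1)f\|_1\le C_6\,\epsilon\,\|f\|_\alpha$ with $C_6$ depending only on $\alpha,c,C,d$ (through $A_\ast$, $c_T=\|T'\|_\infty\le \bar C$, and the $T''$ bound), which is exactly the claim; this then feeds hypothesis (\ref{1221}) of Proposition \ref{pto}.
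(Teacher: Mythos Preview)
Your proposal is correct and follows essentially the same route as the paper: write $L_0f-L_1f$ branch by branch, add and subtract the mixed term $f(T_{1,i}^{-1})/T_0'(T_{0,i}^{-1})$ to produce the ``variation of $f$'' piece (controlled by $|f'(\xi)|\le\|f\|_\alpha\,\xi^{-\alpha-1}$, $\xi\asymp x$, together with N1) and the ``variation of the weight'' piece (split once more and controlled by the $|T''(x)|\le \bar C x^{\bar\alpha-1}$ bound plus N1, and by N2). The only cosmetic difference is that in the second split you insert $1/T_1'(T_{0,i}^{-1})$ (hence invoke $T_1''$) whereas the paper inserts $1/T_0'(T_{1,i}^{-1})$ (hence invokes $T_0''$); since both maps lie in $M(\alpha,c,C,d)$ this is immaterial and the resulting constant still depends only on the class parameters.
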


\begin{proof}
Writing the transfer operators explicitly:%
\begin{equation*}
L_{0}f(x)=\frac{f(T_{0,1}^{-1}(x))}{T_{0}^{\prime }(T_{0,1}^{-1}(x))}+\frac{%
f(T_{0,2}^{-1}(x))}{T_{0}^{\prime }(T_{0,2}^{-1}(x))},L_{1}f(x)=\frac{%
f(T_{1,1}^{-1}(x))}{T_{1}^{\prime }(T_{1,1}^{-1}(x))}+\frac{%
f(T_{1,2}^{-1}(x))}{T_{1}^{\prime }(T_{1,2}^{-1}(x))}
\end{equation*}%
\begin{eqnarray*}
||L_{0}f(x)-L_{1}f(x)||_{1} &\leq &||\frac{f(T_{0,1}^{-1}(x))}{T_{0}^{\prime
}(T_{0,1}^{-1}(x))}+\frac{f(T_{0,2}^{-1}(x))}{T_{0}^{\prime
}(T_{0,2}^{-1}(x))}-\frac{f(T_{1,1}^{-1}(x))}{T_{1}^{\prime
}(T_{1,1}^{-1}(x))}-\frac{f(T_{1,2}^{-1}(x))}{T_{1}^{\prime
}(T_{1,2}^{-1}(x))}||_{1} \\
&\leq &||\frac{f(T_{0,1}^{-1}(x))}{T_{0}^{\prime }(T_{0,1}^{-1}(x))}+\frac{%
f(T_{0,2}^{-1}(x))}{T_{0}^{\prime }(T_{0,2}^{-1}(x))}-\frac{%
f(T_{1,1}^{-1}(x))}{T_{0}^{\prime }(T_{0,1}^{-1}(x))}-\frac{%
f(T_{1,2}^{-1}(x))}{T_{0}^{\prime }(T_{0,2}^{-1}(x))}||_{1} \\
&&+||\frac{f(T_{1,1}^{-1}(x))}{T_{0}^{\prime }(T_{0,1}^{-1}(x))}+\frac{%
f(T_{1,2}^{-1}(x))}{T_{0}^{\prime }(T_{0,2}^{-1}(x))}-\frac{%
f(T_{1,1}^{-1}(x))}{T_{1}^{\prime }(T_{1,1}^{-1}(x))}-\frac{%
f(T_{1,2}^{-1}(x))}{T_{1}^{\prime }(T_{1,2}^{-1}(x))}||_{1} \\
&\leq &||\frac{f(T_{0,1}^{-1}(x))}{T_{0}^{\prime }(T_{0,1}^{-1}(x))}-\frac{%
f(T_{1,1}^{-1}(x))}{T_{0}^{\prime }(T_{0,1}^{-1}(x))}||_{1}+||\frac{%
f(T_{0,2}^{-1}(x))}{T_{0}^{\prime }(T_{0,2}^{-1}(x))}-\frac{%
f(T_{1,2}^{-1}(x))}{T_{0}^{\prime }(T_{0,2}^{-1}(x))}||_{1} \\
&&+||\frac{f(T_{1,1}^{-1}(x))}{T_{0}^{\prime }(T_{0,1}^{-1}(x))}-\frac{%
f(T_{1,1}^{-1}(x))}{T_{1}^{\prime }(T_{1,1}^{-1}(x))}||_{1}+||\frac{%
f(T_{1,2}^{-1}(x))}{T_{0}^{\prime }(T_{0,2}^{-1}(x))}-\frac{%
f(T_{1,2}^{-1}(x))}{T_{1}^{\prime }(T_{1,2}^{-1}(x))}||_{1} \\
&=&I+II+III+IV
\end{eqnarray*}

Now

\begin{eqnarray*}
I &=&||\frac{f(T_{0,1}^{-1}(x))-f(T_{1,1}^{-1}(x))}{T^{\prime
}(T_{0,1}^{-1}(x))}||\leq ||f(T_{0,1}^{-1}(x))-f(T_{1,1}^{-1}(x))|| \\
&\leq &\int_{0}^{1}|f(T_{0,1}^{-1}(x))-f(T_{1,1}^{-1}(x))|~dx \\
&\leq &\int_{0}^{1}||f||_{\beta }\xi ^{-\beta
-1}|T_{0,1}^{-1}(x)-T_{1,1}^{-1}(x)|~dx
\end{eqnarray*}

by Lagrange theorem, where 
\begin{equation*}
\min (T_{0,1}^{-1}(x),T_{1,1}^{-1}(x))\leq \xi \leq \max
(T_{0,1}^{-1}(x),T_{1,1}^{-1}(x)).
\end{equation*}%
But since $T_{i}\in M(\alpha ,c,C,d)$%
\begin{equation*}
C^{-1}x\leq \min (T_{0,1}^{-1}(x),T_{1,1}^{-1}(x))\leq \max
(T_{0,1}^{-1}(x),T_{1,1}^{-1}(x))\leq Cx
\end{equation*}%
hence%
\begin{eqnarray*}
\int_{0}^{1}|f(T_{0,1}^{-1}(x))-f(T_{1,1}^{-1}(x))|~dx &\leq
&\int_{0}^{1}||f||_{\beta }C^{\beta +1}x^{-\beta
-1}|T_{0,1}^{-1}(x)-T_{1,1}^{-1}(x)|~dx \\
&\leq &C^{\beta +1}\epsilon ||f||_{\beta }.
\end{eqnarray*}

For the summand $II$ the estimation is easier, since $\min
(T_{0,2}^{-1}(x),T_{1,2}^{-1}(x))\geq d\geq 0.$%
\begin{eqnarray*}
||\frac{f(T_{0,2}^{-1}(x))}{T_{0}^{\prime }(T_{0,2}^{-1}(x))}-\frac{%
f(T_{1,2}^{-1}(x))}{T_{0}^{\prime }(T_{0,2}^{-1}(x))}|| &\leq &\int
|f(T_{0,2}^{-1}(x))-f(T_{1,2}^{-1}(x))|dx \\
&\leq &\int_{0}^{1}||f||_{\beta }d^{-\beta
-1}|T_{0,1}^{-1}(x)-T_{1,1}^{-1}(x)|~dx \\
&\leq &d^{-\beta -1}\epsilon ||f||_{\beta }.
\end{eqnarray*}

For the summand $III:$ 
\begin{eqnarray}
III &=&||\frac{f(T_{1,1}^{-1}(x))}{T_{0}^{\prime }(T_{0,1}^{-1}(x))}-\frac{%
f(T_{1,1}^{-1}(x))}{T_{1}^{\prime }(T_{1,1}^{-1}(x))}||_{1}  \label{summm} \\
&\leq &||\frac{f(T_{1,1}^{-1}(x))}{T_{0}^{\prime }(T_{0,1}^{-1}(x))}-\frac{%
f(T_{1,1}^{-1}(x))}{T_{0}^{\prime }(T_{1,1}^{-1}(x))}||_{1}+||\frac{%
f(T_{1,1}^{-1}(x))}{T_{0}^{\prime }(T_{1,1}^{-1}(x))}-\frac{%
f(T_{1,1}^{-1}(x))}{T_{1}^{\prime }(T_{1,1}^{-1}(x))}||_{1}  \notag
\end{eqnarray}

The last two summands can be estimated as follows%
\begin{eqnarray*}
||\frac{f(T_{1,1}^{-1}(x))}{T_{0}^{\prime }(T_{0,1}^{-1}(x))}-\frac{%
f(T_{1,1}^{-1}(x))}{T_{0}^{\prime }(T_{1,1}^{-1}(x))}||_{1} &\leq &\int
|f(T_{1,1}^{-1}(x))|~dx~||\frac{1}{T_{0}^{\prime }(T_{0,1}^{-1}(x))}-\frac{1%
}{T_{0}^{\prime }(T_{1,1}^{-1}(x))}||_{\infty } \\
&\leq &||f||_{\alpha }\int (T_{1,1}^{-1}(x))^{-\alpha }~dx||\frac{1}{%
T_{0}^{\prime }(T_{0,1}^{-1}(x))}-\frac{1}{T_{0}^{\prime }(T_{1,1}^{-1}(x))}%
||_{\infty } \\
&\leq &Const_{1}.||f||_{\alpha }||\frac{-T_{0}^{\prime \prime }}{%
(T_{0}^{\prime })^{2}}(\xi )~(T_{0,1}^{-1}(x)-T_{1,1}^{-1}(x))||_{\infty }
\end{eqnarray*}

for some $\xi $ such that 
\begin{equation*}
C^{-1}x\leq \min (T_{0,1}^{-1}(x),T_{1,1}^{-1}(x))\leq \xi \leq \max
(T_{0,1}^{-1}(x),T_{1,1}^{-1}(x))\leq Cx
\end{equation*}%
\ as before. Since $|T_{0,i}^{-1}(x)-T_{1,i}^{-1}(x)|\leq \epsilon x^{\alpha
+1}$ and $|T_{0}^{\prime \prime }(x)|\leq Cx^{\alpha -1},$ then 
\begin{equation*}
\sup |\frac{-T_{0}^{\prime \prime }}{(T_{0}^{\prime })^{2}}(\xi
)(T_{0,1}^{-1}(x)-T_{1,1}^{-1}(x))|\leq C^{2}\epsilon
\end{equation*}%
hence%
\begin{equation*}
||\frac{f(T_{1,1}^{-1}(x))}{T_{0}^{\prime }(T_{0,1}^{-1}(x))}-\frac{%
f(T_{1,1}^{-1}(x))}{T_{0}^{\prime }(T_{1,1}^{-1}(x))}||_{1}\leq
Const_{2}.~\epsilon ||f||_{\alpha }.
\end{equation*}%
For the other summand in (\ref{summm})%
\begin{eqnarray*}
||\frac{f(T_{1,1}^{-1}(x))}{T_{0}^{\prime }(T_{1,1}^{-1}(x))}-\frac{%
f(T_{1,1}^{-1}(x))}{T_{1}^{\prime }(T_{1,1}^{-1}(x))}||_{1} &\leq & \\
Const.~||f||_{\alpha }||T_{0}^{\prime }-T_{1}^{\prime }||_{\infty } &\leq
&Const.~\epsilon ||f||_{\alpha }.
\end{eqnarray*}

$IV$ can be treated similarly.
\end{proof}

By what is proved above we can state the following quantitative statistical
stability under deterministic perturbations result:

\begin{theorem}
\label{pto copy(1)}Let us consider a family of maps $T_{s}$ , $s\in \lbrack
0,1)$ , such that $T_{0}\in M_{0}(\alpha ,c,C,d)$ and $T_{s}\in M(\alpha
,c,C,C_{3,}d)$. Suppose that each $T_{s}$ satisfy the assumptions ($\ref{M2}$%
), ($\ref{M3}$), ($\ref{M4}$) of Lemma \ref{Mdue} uniformly (whit uniform
constants for each $s$ ). Let us suppose that also assumptions $N1,N2$ are
satisfied, i.e:

\begin{itemize}
\item for each $i\in \{0,1\},$ $s,x\in \lbrack 0,1]~,~x^{-\alpha
-1}|T_{0,i}^{-1}(x)-T_{s,i}^{-1}(x)|\leq \epsilon _{s};$

\item for all $s,x\in \lbrack 0,1]~,~||T_{0}^{\prime }(x)-T_{s}^{\prime
}(x)||_{\infty }\leq \epsilon _{s}.$
\end{itemize}

Let $f_{s}$ the absolutely continuous invariant measures of $T_{s}$, then
for each $0<\gamma <\frac{1}{\alpha }-1$ 
\begin{equation*}
||f_{s}-f_{0}||_{1}\leq O(\epsilon _{s}^{1-\frac{1}{\frac{\gamma }{2}%
(1-\alpha )+1}}).
\end{equation*}
\end{theorem}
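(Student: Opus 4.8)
The plan is to assemble the pieces already proved in the excerpt and feed them into Proposition~\ref{pto}. The structure of the argument is essentially a bookkeeping exercise: Theorem~\ref{pto copy(1)} is the concrete realization of the abstract machinery, so the proof should verify each of the four hypotheses of Theorem~\ref{gen} (equivalently, the hypotheses of Proposition~\ref{pto}) for the family $T_s$ and then invoke that proposition with $\epsilon = \epsilon_s$.

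First I would establish the convergence to equilibrium for $L_0$ with respect to the norms $||~||_\alpha$ (strong) and $||~||_1$ (weak). Since $T_0 \in M_0(\alpha,c,C,d)$, the decay-of-correlations estimate (\ref{qwq}) holds; Proposition~\ref{6} (together with its Remark) upgrades it to the uniform Lipschitz form (\ref{7}); and then the Proposition in the subsection ``Convergence to equilibrium for the $||~||_\alpha$ norm'' converts this into $||L_0^n f||_1 \le C_4 n^{-\frac{\gamma}{2}(1-\alpha)}||f||_\alpha$ for every $f$ with $\int f\,dm = 0$. Thus item~1 of Theorem~\ref{gen} holds with $\phi(n) = C_4 n^{-\frac{\gamma}{2}(1-\alpha)}$, i.e. a power law with exponent $\frac{\gamma}{2}(1-\alpha)$, valid for every $0<\gamma<\frac1\alpha-1$. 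Item~3 ($\tilde C = 1$) is immediate because transfer operators are $L^1$ contractions, as noted just before Proposition~\ref{pto}.

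Next I would handle item~2, the uniform bound on $||f_s||_\alpha$. Each $T_s$ lies in $M(\alpha,c,C,C_3,d)$ and, by hypothesis, satisfies (\ref{M2}), (\ref{M3}), (\ref{M4}) of Lemma~\ref{Mdue} with constants uniform in $s$. Hence Proposition~\ref{norrm} gives $||f_s||_\alpha \le \max(A_*, A_*(a_{T_s}+b_{T_s}))$, and uniformity of $a_{T_s}, b_{T_s}$ (and of $A_*$, which depends only on $\alpha, C_3, d$) yields a single constant $M$ with $\max_s ||f_s||_\alpha \le M$, in particular $\max(||f_0||_\alpha,||f_s||_\alpha)\le M$. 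Then for item~4 I would invoke the Proposition on the mixed norm of the difference of operators: assumptions $N1$ and $N2$ of that proposition are exactly the two bulleted hypotheses of the theorem with $\epsilon = \epsilon_s$, so it gives $\sup_{||f||_\alpha\le 1}||(L_0-L_s)f||_1 \le C_6\,\epsilon_s$. Rescaling $f$ shows $\sup_{||f||_\alpha\le 1}||(L_0-L_s)f||_1 \le C_6\epsilon_s$ is precisely condition (\ref{1221}) with $\epsilon$ replaced by $C_6\epsilon_s$ (and $C_6$ independent of $s$).

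Finally, applying Proposition~\ref{pto} to the pair $L_0, L_s$ with the perturbation size $C_6\epsilon_s$ in place of $\epsilon$ gives $||f_s - f_0||_1 \le K_2 (C_6\epsilon_s)^{1-\frac{1}{\frac{\gamma}{2}(1-\alpha)+1}}$, which is $O\!\left(\epsilon_s^{1-\frac{1}{\frac{\gamma}{2}(1-\alpha)+1}}\right)$ since $K_2$ and $C_6$ do not depend on $s$; this is the claimed bound, valid for every admissible $\gamma$. The only genuinely non-trivial point — and the one I would be most careful about — is confirming that the constants $a_{T_s}, b_{T_s}, K_{T_s}, c_{T_s}$ entering Lemma~\ref{Mdue}, and hence $M$, can indeed be chosen uniformly over $s$: this is where the hypothesis ``uniformly (with uniform constants for each $s$)'' in the statement is used, and it reduces to noting that all the suprema in (\ref{M2})--(\ref{M4}) are controlled by the fixed parameters $\alpha,c,C,C_3,d$ of the class together with the uniform $C^1$-type bounds, so no $s$-dependence escapes into the final constant. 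Everything else is substitution.
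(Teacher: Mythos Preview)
Your proposal is correct and matches the paper's approach exactly: the paper does not give an explicit proof of Theorem~\ref{pto copy(1)} but introduces it with ``By what is proved above we can state the following\ldots'', meaning the result is obtained precisely by assembling the pieces (convergence to equilibrium for $||\cdot||_\alpha$, the uniform bound from Proposition~\ref{norrm}, $\tilde C=1$, and the mixed-norm estimate under $N1,N2$) and feeding them into Proposition~\ref{pto}, just as you describe.
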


\begin{remark}
We remark that this statement gives Holder continuity if $1-\frac{1}{\frac{%
\gamma }{2}(1-\alpha )+1}>0$. This is always verified for $0\leq \alpha <1.$
\end{remark}

\end{document}